\begin{document}

\newtheorem{theorem}{Theorem}
\newtheorem{lemma}[theorem]{Lemma}
\newtheorem{example}[theorem]{Example}
\newtheorem{algol}{Algorithm}
\newtheorem{corollary}[theorem]{Corollary}
\newtheorem{prop}[theorem]{Proposition}
\newtheorem{definition}[theorem]{Definition}
\newtheorem{question}[theorem]{Question}
\newtheorem{problem}[theorem]{Problem}
\newtheorem{remark}[theorem]{Remark}
\newtheorem{conjecture}[theorem]{Conjecture}

\newcommand{\comm}[1]{\marginpar{%
\begin{color}{blue}
\vskip-\baselineskip %raise the marginpar a bit
\raggedright\footnotesize
\itshape\hrule \smallskip #1\par\smallskip\hrule\end{color}}}

\newcommand{\commI}[1]{\marginpar{%
\begin{color}{magenta}
\vskip-\baselineskip %raise the marginpar a bit
\raggedright\footnotesize
\itshape\hrule \smallskip I: #1\par\smallskip\hrule\end{color}}}
\def\xxx{\vskip5pt\hrule\vskip5pt}

%\newcommand{\eqname}[1]{\tag{#1}}% Tag equation with name

%%%%%%%%%%%%%%%%%%%%%%%%%
% Alphabet calligraphic %
%%%%%%%%%%%%%%%%%%%%%%%%%
\def\cA{{\mathcal A}}
\def\cB{{\mathcal B}}
\def\cC{{\mathcal C}}
\def\cD{{\mathcal D}}
\def\cE{{\mathcal E}}
\def\cF{{\mathcal F}}
\def\cG{{\mathcal G}}
\def\cH{{\mathcal H}}
\def\cI{{\mathcal I}}
\def\cJ{{\mathcal J}}
\def\cK{{\mathcal K}}
\def\cL{{\mathcal L}}
\def\cM{{\mathcal M}}
\def\cN{{\mathcal N}}
\def\cO{{\mathcal O}}
\def\cP{{\mathcal P}}
\def\cQ{{\mathcal Q}}
\def\cR{{\mathcal R}}
\def\cS{{\mathcal S}}
\def\cT{{\mathcal T}}
\def\cU{{\mathcal U}}
\def\cV{{\mathcal V}}
\def\cW{{\mathcal W}}
\def\cX{{\mathcal X}}
\def\cY{{\mathcal Y}}
\def\cZ{{\mathcal Z}}

\def\C{\mathbb{C}}
\def\F{\mathbb{F}}
\def\K{\mathbb{K}}
\def\G{\mathbb{G}}
\def\Z{\mathbb{Z}}
\def\R{\mathbb{R}}
\def\Q{\mathbb{Q}}
\def\N{\mathbb{N}}
\def\M{\textsf{M}}
\def\U{\mathbb{U}}
\def\PP{\mathbb{P}}
\def\A{\mathbb{A}}
\def\p{\mathfrak{p}}
\def\n{\mathfrak{n}}
\def\X{\mathcal{X}}
\def\x{\textrm{\bf x}}
\def\w{\textrm{\bf w}}
\def\ovQ{\overline{\Q}}
\def\rank#1{\mathrm{rank}#1}
\def\wf{\widetilde{f}}
\def\wg{\widetilde{g}}

\def\({\left(}
\def\){\right)}
\def\[{\left[}
\def\]{\right]}
\def\<{\langle}
\def\>{\rangle}

\def\gen#1{{\left\langle#1\right\rangle}}
\def\genp#1{{\left\langle#1\right\rangle}_p}
\def\genPs{{\left\langle P_1, \ldots, P_s\right\rangle}}
\def\genPsp{{\left\langle P_1, \ldots, P_s\right\rangle}_p}

\def\e{e}

\def\eq{\e_q}
\def\fh{{\mathfrak h}}

\def\lcm{{\mathrm{lcm}}\,}

\def\({\left(}
\def\){\right)}
\def\fl#1{\left\lfloor#1\right\rfloor}
\def\rf#1{\left\lceil#1\right\rceil}
\def\mand{\qquad\mbox{and}\qquad}

\def\jt{\tilde\jmath}
\def\ellmax{\ell_{\rm max}}
\def\llog{\log\log}

\def\m{{\rm m}}
\def\ch{\hat{h}}
\def\GL{{\rm GL}}
\def\Orb{\mathrm{Orb}}
\def\Per{\mathrm{Per}}
\def\Preper{\mathrm{Preper}}
\def \S{\mathcal{S}}
\def\vec#1{\mathbf{#1}}
\def\ov#1{{\overline{#1}}}
\def\Gal{{\rm Gal}}

\newcommand{\bfalpha}{{\boldsymbol{\alpha}}}
\newcommand{\bfomega}{{\boldsymbol{\omega}}}

\newcommand{\Ch}{{\operatorname{Ch}}}
\newcommand{\Elim}{{\operatorname{Elim}}}
\newcommand{\proj}{{\operatorname{proj}}}
\newcommand{\h}{{\operatorname{h}}}

\newcommand{\hh}{\mathrm{h}}
\newcommand{\aff}{\mathrm{aff}}
\newcommand{\Spec}{{\operatorname{Spec}}}
\newcommand{\Res}{{\operatorname{Res}}}

\numberwithin{equation}{section}
\numberwithin{theorem}{section}

\title{On roots of unity in orbits of rational functions}

\author{Alina Ostafe}
\address{School of Mathematics and Statistics, University of New South Wales, Sydney NSW 2052, Australia}
\email{alina.ostafe@unsw.edu.au}

\subjclass[2010]{11R18, 37F10}
%\keywords{greatest common divisor, polynomials}

\begin{abstract} 
In this paper we characterise univariate rational functions over a number field $\K$ having infinitely many points in the cyclotomic closure $\K^c$ for which the orbit contains a root of unity. Our results are similar to previous results of Dvornicich and Zannier describing all polynomials having infinitely many preperiodic points in $\K^c$.
\end{abstract}

\maketitle

\section{Introduction and statements}

\subsection{Motivation}
For a rational function $h\in \K(X)$ over a number field $\K$, we define the $n$th iterate of $h$ by
$$
h^{(0)}=X,\quad h^{(n)}=h(h^{(n-1)}),\quad n\ge 1.
$$

For an element $\alpha\in \ov\Q$ we define the {\it orbit} of $h$ at $\alpha$ as the set
\begin{equation}
\label{eq:Orb h}
\Orb_h(\alpha)=\{\alpha_n\mid \alpha_0=\alpha \ \text{and} \ \alpha_n=h(\alpha_{n-1}), \ n=1,\ldots\}.
\end{equation}
We use the convention that the orbit terminates if for some $n\ge 0$, $\alpha_n$ is a pole of $h$
and, in this case, $\Orb_h(\alpha)$ is a finite set. 

If the point $\alpha_n$ in~\eqref{eq:Orb h} is defined, then
$\alpha_0$ is not a pole of $h^{(n)}$ and
$\alpha_n=h^{(n)}(\alpha_0)$. However, the fact that the evaluation
$h^{(n)}(\alpha_0)$ is defined does not imply the existence of
$\alpha_n$, since this latter point is defined if and only if all the
previous points of the orbit~\eqref{eq:Orb h} are defined and
$\alpha_{n-1}$ is not a pole of $h$.  For instance, let 
$h(X)=1/X$. Then $h^{(2)}(X)=X$ and we see that $h^{(2)}(0)=0$, but
$\alpha_2=h(h(0))$ is not defined as $0$ is a pole for~$h$.  

In this paper we are looking at the presence of roots of unity in orbits~\eqref{eq:Orb h} of univariate rational functions. In particular, we prove that unless the rational function is very special, there are finitely many 
initial points that are roots of unity such that the corresponding orbit contains another 
root of unity. In fact our result is more general. 

This work is motivated by a similar result of Dvornicich and Zannier~\cite[Theorem 2]{DZ}  that applies 
only for preperiodic points of univariate polynomials (for a multivariate characterisation see~\cite[Theorem 34]{KaSi}). Our methods follow the same ideas and technique of~\cite{DZ}, including a sharp Hilbert's irreducibility theorem for cyclotomic extensions~\cite[Corollary 1]{DZ}  and an extension of Loxton's result~\cite[Theorem 1]{Loxt} in representing cyclotomic integers as short combination of roots of unity~\cite[Theorem L]{DZ}. 
We note that a more general Hilbert irreducibility theorem over cyclotomic fields has been obtained by Zannier~\cite[Theorem 2.1]{Za}, which may be of use for further generalisations of the results of~\cite{DZ} and of this paper. In Section~\ref{sec:main} we suggest such a generalisation  that covers these results in a unified scenario.

\subsection{Notation, conventions and definitions} 

We use the following notations:
\begin{itemize}
\item[] $\ovQ$: the  algebraic  closure of $\Q$;
\item[ ] $\U$: the set of all roots of unity in $\C$; 
\item[] $\K$: number field;
\item[] $\K^c=\K(\U)$: the cyclotomic closure of $\K$; 
\item[] $C(\alpha)$: the maximum of absolute values of the conjugates over $\Q$ of an algebraic number $\alpha$; 
\item[] $\K^c_A$: the set of $\alpha\in \K^c$ such that $C(\alpha)\le A$;
\item[] $T_d$: the Chebyshev polynomial of degree $d$; it is uniquely determined by the equation $T_d(x+x^{-1})=x^d+x^{-d}$.
\end{itemize}

\begin{definition} [\sl{Special rational functions}]
\label{def:Srf}
We call a rational function $h\in\K(X)$  {\sl special} if $h$ is a conjugate (with respect to the group action given by $\mathrm{PGL}_2(\K)$ on $\K(X)$) to $\pm X^d$ or
to $\pm T_d(X)$. 
\end{definition}

For a rational function $h\in\K(X)$, we define
\begin{equation}
\label{eq:Sh}
\cS_h=\{\alpha\in \K^c \mid \ \alpha_k\in\U \textrm{ for some } k\ge 1\},
\end{equation}
where $\alpha_k$ is defined as in~\eqref{eq:Orb h}.

As remarked before, if $\alpha_k\in\U$ for some $k\ge 1$, then $\alpha_k=h^{(k)}(\alpha)$, and $h^{(\ell)}(\alpha)$ is well defined, that is, $h^{(\ell)}(\alpha)$ is not a pole for $h$, for all $\ell<k$.

\subsection{Main results}
\label{sec:main}

Our goal is to prove that the set $\cS_h$ is finite unless the rational function $h$ is special. 
Let  $\Per(h)$ be the set of the periodic points of $h$ (that is points which generate 
purely periodic orbits) and let  $\Preper(h)$  be the set of the pre-periodic points of $h$ (that is, points  that have a periodic point in their orbits).

If $f$ is a polynomial, then the finiteness (under some natural conditions) of the set $\Per(f)\cap \U$, which is a subset of $\cS_f$, 
follows immediately as a very special case from a more general result of  
Dvornicich and Zannier~\cite{DZ}. 
More precisely, 
let $f\in \K[X]$ be of degree at least $2$. Then, by~\cite[Theorem~2]{DZ}, the set $\Preper(f)\cap \K^c$ is finite unless, for some linear polynomial $L\in\ov \Q[X]$  and for some $\varepsilon=\pm 1$, $(L\circ f\circ L^{-1})(X)$ is either $(\varepsilon X)^d$ or $T_d(\varepsilon X)$.

Here we extend the finiteness property of  $\Per(f)\cap \U$ to the full set $S_f$, as well as obtain such a result for a large class of 
{\it non-special\/} rational functions, see Definition~\ref{def:Srf}.

\begin{theorem}
\label{thm:KcOrb}
Let $h=f/g\in\K(X)$, where $f,g\in\K[X]$ with $\deg f=d$ and $\deg g=e$. Assume that $d-e>1$
and that $\max\{d,e\}\ge 2$.
If $f(X)-Y^mg(X)$ as a polynomial in X does not have a root in $\K^c(Y)$ for all positive integers $m\le \deg_X f$, 
then $\cS_h$ is finite unless $h$ is special. \end{theorem}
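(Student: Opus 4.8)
The plan is to follow the Dvornicich–Zannier strategy adapted to the orbit setting. The key observation is that if $\alpha \in \cS_h$ is a root of unity and $\alpha_k = h^{(k)}(\alpha) \in \U$ for some $k \ge 1$, then every intermediate point $\alpha_\ell = h^{(\ell)}(\alpha)$ for $\ell \le k$ lies in $\K^c$ and, crucially, has bounded house: since $\alpha$ is a root of unity and $\alpha_k$ is a root of unity, and the $\alpha_\ell$ solve the polynomial relations $g(\alpha_{\ell-1})\alpha_\ell = f(\alpha_{\ell-1})$, one bounds $C(\alpha_\ell)$ along the orbit (working backward from $\alpha_k$ or controlling the forward dynamics via the hypothesis $d - e > 1$, which forces $|h(z)| \to \infty$ and, conversely, gives that points whose forward image is small must themselves be small). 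Concretely, I would first show there is an absolute constant $A = A(h)$ such that every $\alpha \in \cS_h$ satisfies $\alpha \in \K^c_A$; this is where $d - e > 1$ is used, to rule out orbits escaping to infinity and returning, and to get the uniform bound on the finitely many preimage branches.

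Next, I would invoke the sharp Hilbert irreducibility theorem for cyclotomic extensions, \cite[Corollary 1]{DZ}, together with Loxton-type representation of cyclotomic integers as short sums of roots of unity, \cite[Theorem L]{DZ}, exactly as in \cite{DZ}. The point is that an element $\beta \in \K^c_A$ is (after clearing denominators) a cyclotomic algebraic integer of bounded house, hence expressible as a bounded-length $\Z$-linear combination of roots of unity; feeding infinitely many such $\alpha$ into the first step of the orbit relation $g(\alpha)\alpha_1 = f(\alpha)$ and applying Hilbert irreducibility to the curve $f(X) - Y g(X) = 0$ (and its iterates) shows that infinitely many specialisations $Y = \zeta$ a root of unity yield $X = \alpha$ a root of unity with $f(\alpha) = \zeta g(\alpha)$. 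This is precisely the kind of statement controlled by whether $f(X) - Y^m g(X)$ has a root in $\K^c(Y)$, which is exactly what the hypothesis excludes — so the assumption in the theorem is designed to kill the generic source of infinitely many solutions, leaving only the degenerate cases.

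I would then analyse the remaining degenerate configurations. After the irreducibility input, infinitude of $\cS_h$ forces the orbit map $h^{(k)}$ (for some fixed $k$, by a pigeonhole on $k$) to send infinitely many roots of unity to roots of unity; by the classical result that a rational map doing this must be special (this is the $\K^c$-rational-points-on-$\mathbb{G}_m$ phenomenon, essentially the content behind \cite[Theorem 2]{DZ} and its rational-function analogues), $h^{(k)}$ is a conjugate of $\pm X^D$ or $\pm T_D$. Finally I would descend from $h^{(k)}$ being special to $h$ itself being special: if an iterate of $h$ is conjugate to a power or Chebyshev map, then by rigidity of these families (their commuting/factoring structure — decompositions of $X^D$ and $T_D$ are themselves of the same type) $h$ must already be special, which contradicts the standing assumption and completes the proof.

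The main obstacle I expect is the descent step, i.e.\ controlling the branches of $h^{-1}$ and the house bound uniformly when $g \ne 1$: unlike the polynomial case in \cite{DZ}, here preimages under a rational map can be large or can collide with poles, so establishing $\cS_h \subseteq \K^c_A$ requires carefully using $d - e > 1$ (which gives a single dominant branch at infinity and forces the other preimages to stay bounded) and the "no root in $\K^c(Y)$" hypothesis to prevent a whole curve of bad solutions. Packaging this bound correctly, and then cleanly propagating the special structure from an iterate back to $h$, are the two places where the argument needs the most care.
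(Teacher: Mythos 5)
Your setup is on the right track — reducing the last point $h^{(k)}(\alpha)$ to finitely many candidates via the cyclotomic Hilbert Irreducibility Theorem of~\cite[Corollary~1]{DZ}, using $d-e>1$ to pull a house bound back through the orbit, and then applying Loxton's theorem to get bounded-length representations by roots of unity — is essentially the paper's opening. But your endgame is different from the paper's and, as stated, it has two serious gaps.

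First, you propose to ``pigeonhole on $k$'' to isolate a single iterate $h^{(k)}$ sending infinitely many roots of unity to roots of unity. This does not work: $k$ ranges over all positive integers, so infinitely many $\alpha \in \cS_h$ can each use a different (and unbounded) $k$, and no single iterate need receive infinitely many of them. This is exactly the difficulty that forces the argument to engage with \emph{all} iterates $h^{(r)}$, $r = 0, \ldots, M$, simultaneously rather than fixing one. Moreover, even if you did fix $k$, the initial points $\alpha$ are arbitrary elements of $\K^c$, not roots of unity, so the statement ``$h^{(k)}$ sends infinitely many roots of unity to roots of unity'' is not the configuration you'd actually have. Finally, deducing that $h$ itself is special from an iterate $h^{(k)}$ being special is a nontrivial rigidity statement that you leave unproved and that the paper avoids entirely.

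Second, and more fundamentally, you are missing the two tools that do the real work in the paper. After Loxton gives $h_\mu^{(r)}(\alpha) = \sum_i c_{r,i}\xi_{r,i}$ with bounded length and (by pigeonhole) fixed coefficients $c_{r,i}$, the paper invokes the \emph{torsion points theorem} (\cite[Theorem~4.2.2]{BG}) on the subvariety of $\G_m^{B_{h,\K}(M+1)}$ cut out by the relations~\eqref{eq:variterlox}; infinitude of solutions forces a positive-dimensional torsion coset, which yields one-parameter identities $h_\mu^{(r)}(q(t)) = \sum_i c_{r,i}\xi_{r,i}t^{e_{r,i}}$ with a \emph{uniformly bounded} number of terms on the right for every $r \le M$. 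Then the decisive input is the Fuchs--Zannier theorem (Lemma~\ref{lem:FZ}, \cite[Corollary]{FZ}): for $h$ not special, the number of terms of the iterates $h_\mu^{(r)}(q(t))$ grows linearly in $r$, so choosing $M$ large enough produces a contradiction. Your proposal contains neither the torsion-coset step nor the Fuchs--Zannier term-growth estimate, and the ``classical result that a rational map sending roots of unity to roots of unity must be special'' that you substitute for them is both too vague (it would give only the monomial case, not Chebyshev, if one tried to extract it from the torsion points theorem on $\G_m^2$) and at the wrong level of generality to close the argument. These two missing ingredients are where the actual proof lives.
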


It is also natural to ask whether~\cite[Theorem~2]{DZ} can be extended in full and
thus investigate the finiteness of the set
\begin{equation}
\label{eq:Th}
\cT_h(A)=\{\alpha\in \K^c \mid \ \alpha_k \in\K_A^c   \textrm{ for some } k\ge 1\},
\end{equation}
where $\alpha_k$ is defined in~\eqref{eq:Orb h}.

For any rational function $h=f/g$ satisfying the degree conditions in Theorem~\ref{thm:KcOrb} there is a constant $L_h$ such that if $C(\gamma)> L_h$,
then the sequence $C(h^{(n)}(\gamma))$, $n =1, 2, \ldots$, is strictly monotonically
increasing, see 
Section~\ref{sizegrow} below.
Hence, we have $\Preper(h)\cap \K^c\subseteq \cT_h(L_h)$ and also $\cS_h = \cT_h(1)$.
Unfortunately some underlying tools seem to be missing in this situation.

Maybe one can start with a possibly easier problem, that is, for $a\in\N$ and a rational function $h\in\K(X)$, prove the finiteness of the set $$S_{h,a}=\{\alpha\in \K(\U_a) \mid \alpha_k\in\U_a \textrm{ for some } k\ge 1\},$$ where $\U_a=\{t\in\ovQ\mid t^n=a \textrm{ for some } n\ge 1\}$.

\section{Preliminaries}

\subsection{Hilbert's Irreducibility Theorem over $\K^c$}

We need the following result due to Dvornicich and Zannier~\cite[Corollary~1]{DZ}. We present it however in a weaker form that is needed for our results, but the proof is given within the proof of~\cite[Corollary~1]{DZ}.

\begin{lemma}
\label{lem:cor1DZ}
Let $f\in \K^c[X,Y]$ such that $f(X,Y^m)$ as a polynomial in X does not have a root in $\K^c(Y)$ for all positive integers $m\le \deg_X f$. Then $f(X,\zeta)$ has a root in $\K^c$ for only finitely many roots of unity $\zeta$.
\end{lemma}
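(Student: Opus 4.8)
\textbf{Proof plan for Lemma~\ref{lem:cor1DZ}.}

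The plan is to follow the strategy of Dvornicich and Zannier's proof of \cite[Corollary~1]{DZ}, which combines a Hilbert-irreducibility-type input over $\K^c$ with a descent argument that controls what happens when $Y$ is specialised to a root of unity. First I would fix $f\in\K^c[X,Y]$ and let $D=\deg_X f$. The key reduction is to pass from the single two-variable polynomial to a family indexed by $m$: for a root of unity $\zeta$, writing $\zeta=\eta^m$ for a root of unity $\eta$ of order coprime to anything we like, the condition ``$f(X,\zeta)$ has a root in $\K^c$'' is governed by whether $f(X,Y^m)$ acquires a $\K^c(Y)$-rational root after we substitute $Y=\eta$. So the first step is to argue that it suffices to bound, for each fixed $m\le D$, the set of roots of unity $\eta$ for which $f(X,Y^m)$ specialised at $Y=\eta$ has a root in $\K^c$; and here the hypothesis enters precisely as the statement that $f(X,Y^m)$ has \emph{no} root in $\K^c(Y)$, i.e.\ is ``generically irreducible over $\K^c$'' in the relevant weak sense (no linear factor over the function field).

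The second step is the Hilbert irreducibility input. Having a polynomial $F_m(X,Y)=f(X,Y^m)\in\K^c[X,Y]$ with no root in $\K^c(Y)$, one wants to conclude that $F_m(X,y_0)$ has a root in $\K^c$ for only finitely many $y_0$ ranging over a thin-like set — here the set of roots of unity. Over a number field this is classical Hilbert irreducibility; the point of \cite[Corollary~1]{DZ} is the sharp version over the infinite extension $\K^c$, where the roots of unity are exactly the ``dangerous'' specialisation points and one must show they do not destroy irreducibility except finitely often. I would invoke precisely the mechanism of \cite{DZ}: combine the Hilbert-irreducibility count with Loxton's theorem (\cite[Theorem~L]{DZ}) on representing cyclotomic integers as short sums of roots of unity to bound the height/complexity of any hypothetical $\K^c$-rational root of $F_m(X,\eta)$, then show that infinitely many such $\eta$ would force $F_m$ itself to have a root in $\K^c(Y)$, contradicting the hypothesis.

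The third step is bookkeeping: assemble the finitely many bad $\eta$ over all $m\le D$ and translate back to finitely many bad $\zeta$. Since a root of unity $\zeta$ can be written as $\eta^m$ in only boundedly many essentially different ways once we fix the range $m\le D$, and the reduction in Step~1 shows every bad $\zeta$ arises from a bad $(\,m,\eta\,)$ pair, finiteness of each bad-$\eta$ set yields finiteness of the bad-$\zeta$ set.

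I expect the main obstacle to be Step~2: making the passage from ``finitely many specialisations violate irreducibility'' (qualitative Hilbert) to a genuinely effective statement over $\K^c$, because $\K^c$ is not a number field and the usual Hilbert irreducibility theorem does not apply verbatim. The real content is exactly the part that \cite{DZ} isolates — controlling the arithmetic of cyclotomic specialisations via Loxton's short-representation theorem — and since the excerpt permits me to cite \cite[Corollary~1]{DZ} and \cite[Theorem~L]{DZ}, my plan is to quote this machinery rather than reconstruct it, using it as the engine that converts the function-field hypothesis into the claimed finiteness. The one genuinely new (but routine) piece is the reduction in Steps~1 and~3 showing that the ``for all $m\le\deg_X f$'' quantifier in the hypothesis is exactly what is needed to handle an arbitrary root of unity $\zeta$ rather than just a ``primitive'' one.
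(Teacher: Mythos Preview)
The paper's ``proof'' is a bare citation: it simply observes that this exact statement is established inside the proof of \cite[Corollary~1]{DZ} (for the auxiliary polynomial called $g$ there), and nothing further is argued. So to match the paper you need only that one sentence.

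Your surrounding scaffolding, however, misidentifies the mechanism, and Step~1 as written does not work. Writing $\zeta=\eta^m$ is vacuous as a reduction: every root of unity is an $m$-th power of another root of unity for \emph{every} $m$, so in particular you may always take $m=1$, $\eta=\zeta$, and you have reduced the problem for $f$ to exactly the same problem for $F_1=f$. More generally your Step~1/Step~2 loop is circular: you propose to show that $F_m(X,\eta)$ has a root in $\K^c$ for only finitely many roots of unity~$\eta$, given that $F_m$ has no root in $\K^c(Y)$ --- but that is precisely the lemma you are trying to prove, applied to $F_m$ instead of $f$. The claim in Step~3 that the quantifier ``for all $m\le\deg_X f$'' is what lets one pass from primitive to arbitrary roots of unity is not correct either.

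In Dvornicich--Zannier's actual argument the integer $m$ does not come from a decomposition of $\zeta$ chosen in advance; it appears on the \emph{output} side. One assumes infinitely many bad $\zeta$, and then the torsion--points theorem on $\G_m$ forces these $\zeta$ to fill out a torsion coset, parametrised (after absorbing a root-of-unity translate into $\K^c$) as $Y\mapsto Y^m$ for some $m$ bounded in terms of $\deg_X f$; this yields a root of $f(X,Y^m)$ in $\K^c(Y)$, contradicting the hypothesis. So the ``for all $m\le\deg_X f$'' is the obstruction to the \emph{conclusion} of the torsion argument, not the input to a preliminary reduction. Your Step~2 gestures at the right engine, but the reduction you built around it should be discarded.
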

\begin{proof} As we have mentioned, this statement is a part of the proof of~\cite[Corollary~1]{DZ}:
the polynomial $g$ which appears in the proof satisfies the same condition as our polynomial $f$
(we also note that here we alternated the roles of the variables $X$ and $Y$).
\end{proof}

We have the following straightforward consequence.
\begin{corollary}
\label{cor:corcor1DZ}
Let $f\in \K^c[X,Y]$ and a linear polynomial $\cL=aY+b\in \K^c[X]$ such that $f(X,\cL(Y^m))$ as a polynomial in X does not have a root in $\K^c(Y)$ for all positive integers $m\le \deg_X f$. Then $f(X,\beta)$ has a root in $\K^c$ for only finitely many elements $\beta\in \cL(\U)$.
\end{corollary}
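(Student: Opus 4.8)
The plan is to deduce Corollary~\ref{cor:corcor1DZ} from Lemma~\ref{lem:cor1DZ} by a change of variables that absorbs the linear polynomial $\cL$. Set $F(X,Y)=f(X,\cL(Y))=f(X,aY+b)\in\K^c[X,Y]$. The point is that, as a polynomial in $X$ over $\K^c(Y)$, $F$ and $f$ have the same degree in $X$, since substituting $aY+b$ for $Y$ does not change $\deg_X$. So $\deg_X F=\deg_X f=:N$, and the hypothesis of the corollary says precisely that $F(X,Y^m)=f(X,\cL(Y^m))$ has no root in $\K^c(Y)$ for all positive integers $m\le N=\deg_X F$. Thus $F$ satisfies the hypothesis of Lemma~\ref{lem:cor1DZ}.

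Applying Lemma~\ref{lem:cor1DZ} to $F$, we conclude that $F(X,\zeta)$ has a root in $\K^c$ for only finitely many roots of unity $\zeta$. Now observe that for $\zeta\in\U$ we have $F(X,\zeta)=f(X,\cL(\zeta))=f(X,\beta)$ where $\beta=\cL(\zeta)=a\zeta+b$. The map $\zeta\mapsto\cL(\zeta)$ is a bijection from $\U$ onto $\cL(\U)$ (recall $a\ne0$ since $\cL$ is linear; its inverse is $\beta\mapsto(\beta-b)/a$). Hence the set of $\beta\in\cL(\U)$ for which $f(X,\beta)$ has a root in $\K^c$ is exactly the image under this bijection of the set of $\zeta\in\U$ for which $F(X,\zeta)$ has a root in $\K^c$, and therefore it is finite. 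This completes the argument.

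There is essentially no obstacle here: the only point requiring a word of care is the invariance of $\deg_X$ under the substitution $Y\mapsto aY+b$, which is what guarantees that the quantifier range ``$m\le\deg_X f$'' in the hypothesis matches the range ``$m\le\deg_X F$'' needed to invoke the lemma. One should also note that $\cL(Y^m)=aY^m+b$ is genuinely a polynomial in $Y$ (not a rational function), so the phrase ``does not have a root in $\K^c(Y)$'' is literally the same kind of statement as in Lemma~\ref{lem:cor1DZ}. Everything else is a formal translation between the $\zeta$ and the $\beta=\cL(\zeta)$ parametrisations.
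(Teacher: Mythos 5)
Your proposal is correct and is essentially the same argument the paper gives: the paper also sets $g(X,Y)=f(X,aY+b)$ and applies Lemma~\ref{lem:cor1DZ} to $g$, translating between $\zeta$ and $\beta=\cL(\zeta)$. You merely spell out the two minor points the paper leaves implicit, namely that $\deg_X$ is unchanged under $Y\mapsto aY+b$ and that $\zeta\mapsto\cL(\zeta)$ is a bijection $\U\to\cL(\U)$.
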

\begin{proof}
For any $\beta=a\zeta+b\in \cL(\U)$, with $\zeta\in\U$, $f(X,\beta)$ has a root in $\K^c$ if and only if $g(X,\zeta)$ has a root in $\K^c$, where $g=f(X,aY+b)\in \K^c[X,Y]$.  The result now follows directly from Lemma~\ref{lem:cor1DZ} applied with the polynomial $g$. 
\end{proof}

\subsection{Representations via linear combinations of roots of unity}

We have the following extension of a result of Loxton~\cite{Loxt}, 
which we present in the form given by~\cite[Theorem L]{DZ}.

\begin{lemma}
\label{lem:loxton}
There exists a number $B$ and a finite set $E\subset \K$ with $\#E\le [\K:\Q]$  such that any algebraic integer $\alpha\in \K^c$ 
can be written as $\alpha=\sum_{i=1}^bc_i\xi_i$, where $c_i\in E$, $\xi_i\in\U$ and $b\le \# E\cdot \cR(BC(\alpha))$, where $\cR:\R\to \R$ is any Loxton function.
\end{lemma}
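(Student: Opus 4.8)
The plan is to reduce the statement to Loxton's theorem itself — the case $\K=\Q$, $E=\{1\}$, which asserts that any algebraic integer lying in some cyclotomic field and having house at most $A$ is a sum of at most $\cR(A)$ roots of unity — by expanding an element of $\K^c$ over $\Q^c=\Q(\U)$ in a fixed $\Q$-rational basis, clearing a fixed denominator so the coordinates become honest cyclotomic integers, applying Loxton's theorem to each coordinate, and reassembling.

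Concretely, I would write $\K=\Q(\theta)$ with $\theta$ an algebraic integer, let $p\in\Z[X]$ be its minimal polynomial over $\Q$, of degree $r=[\K:\Q]$, and set $D=\operatorname{disc}(p)\ne 0$. Since $\K^c=\Q^c(\theta)$, the powers $1,\theta,\dots,\theta^{m-1}$ form a $\Q^c$-basis of $\K^c$, where $m=[\K^c:\Q^c]\le r$. Given an algebraic integer $\alpha\in\K^c$, I fix a single root of unity $\zeta$ with $\alpha\in\K(\zeta)$ and write $\alpha=\sum_{j=0}^{m-1}\beta_j\theta^j$ with $\beta_j\in\Q^c$. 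Applying the $m$ embeddings of $\K^c$ into $\ovQ$ that fix $\Q^c$ pointwise (they fix each $\beta_j$) and inverting the resulting Vandermonde system in the conjugates of $\theta$, Cramer's rule exhibits $\beta_j$ as a fixed $\ovQ$-linear combination of the conjugates of $\alpha$, with coefficients depending only on $\K$ and with denominators dividing $D$ (more precisely $\operatorname{disc}(q)\,\beta_j\in\cO_{\ovQ}$, where $q$ is the minimal polynomial of $\theta$ over $\Q^c$, and $\operatorname{disc}(q)\mid D$ in $\cO_{\Q^c}$). Hence $D\beta_j$ is an algebraic integer; it lies in $\Q^c$, and as $\K(\zeta)\cap\Q^c$ is a finite abelian extension of $\Q$, hence contained in $\Q(\zeta_N)$ for some $N$, it is in fact a cyclotomic integer. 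Moreover every conjugate of $D\beta_j$ over $\Q$ has absolute value at most $B_1\,C(\alpha)$ for a constant $B_1$ depending only on $\K$, i.e.\ $C(D\beta_j)\le B_1 C(\alpha)$.

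Then I would apply Loxton's theorem over $\Q^c$ to each cyclotomic integer $D\beta_j$, obtaining $D\beta_j=\sum_{i=1}^{b_j}\xi_{ji}$ with $\xi_{ji}\in\U$ and $b_j\le\cR(B_1 C(\alpha))$. Multiplying the expansion of $\alpha$ by $D$ and substituting gives $D\alpha=\sum_{j=0}^{m-1}\sum_{i=1}^{b_j}\theta^j\xi_{ji}$, hence $\alpha=\sum_{j,i}(D^{-1}\theta^j)\,\xi_{ji}$. So the statement holds with $B=B_1$, with $E=\{D^{-1},D^{-1}\theta,\dots,D^{-1}\theta^{m-1}\}\subset\K$ (so $\#E=m\le[\K:\Q]$), and with total length $b=\sum_j b_j\le m\cdot\cR(B_1 C(\alpha))=\#E\cdot\cR(B\,C(\alpha))$. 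One could alternatively take for $E$ a suitably scaled dual basis of an integral basis of $\K$ with respect to the trace form, so that the $\beta_j$ become actual traces of $\alpha$; the Vandermonde version above has the advantage of not requiring any estimate on the different of $\K^c/\Q^c$.

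The only real difficulty is the bookkeeping in the middle step, where three conditions must hold simultaneously: the cleared denominator $D$ must depend only on $\K$, not on $\alpha$ nor on the particular cyclotomic field containing it; the house of each coordinate $D\beta_j$ must be bounded \emph{linearly} in $C(\alpha)$, again with a constant depending only on $\K$; and the constant $B$ must be uniform over all Loxton functions $\cR$, which it is, since $B_1$ is extracted purely from $\K$ before $\cR$ enters the argument. None of this is deep, but all three are needed at once, and together they are precisely what forces both the bound $\#E\le[\K:\Q]$ and the extra factor $\#E$ in the length estimate.
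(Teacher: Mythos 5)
The paper does not prove this lemma at all: it is imported verbatim, with attribution, from Dvornicich--Zannier~\cite[Theorem~L]{DZ}. So there is no ``paper's own proof'' to compare against; what you have supplied is a self-contained proof where the paper gives a citation. Your argument is correct and is, in spirit, the same reduction DZ themselves use to deduce Theorem~L from Loxton's original result: expand $\alpha$ in a $\Q^c$-basis of $\K^c$, clear a denominator depending only on $\K$ so the coordinates are cyclotomic integers, bound their houses linearly in $C(\alpha)$, and invoke Loxton coordinate-by-coordinate. (DZ phrase the coordinate extraction via an integral basis and the trace form rather than a Vandermonde inverse, essentially the ``alternative'' you mention at the end; either normalisation works, and you correctly note that the Vandermonde version sidesteps any estimate for the different of $\K^c/\Q^c$.)

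A few details you left implicit are worth spelling out if this were to be written up. First, $D/V$ being an algebraic integer follows from $V^2=\pm\operatorname{disc}(q)$ together with $\operatorname{disc}(q)\mid\operatorname{disc}(p)=D$ in $\cO_{\Q^c}$, which in turn uses that $q$ is monic with coefficients in $\cO_{\Q^c}$ (true, since its roots are $\Q$-conjugates of the algebraic integer $\theta$) and the multiplicativity of discriminants under a factorisation $p=q\,q'$ over $\Q^c$. Second, the linearity of the house bound comes from $\beta_j=\sum_i c_{ji}\,\tau_i(\alpha)$ with $c_{ji}$ depending only on $\K$ and each $\tau_i(\alpha)$ a $\Q$-conjugate of $\alpha$, so $C(D\beta_j)\le\bigl(\sum_i C(Dc_{ji})\bigr)\,C(\alpha)$; this is fine but deserves one line. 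Third, one should note $m=[\K^c:\Q^c]\le[\K:\Q]$ and that when $\K=\Q$ the construction degenerates to $E=\{1\}$, $B=1$, recovering Loxton's statement exactly. None of these is a gap; they are routine verifications, and the overall structure of your proof is sound.
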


\subsection{The size of elements in orbits}
\label{sizegrow}
In this section we prove some useful simple facts about the size of iterates. Although these results follow by simple computations and may be found in other sources, we decided to present the proofs of these statements.

\begin{lemma}
\label{lem:growiter}
Let $f=X^d+a_{d-1}X^{d-1}+\cdots+a_0\in \K[X]$ be of degree $d\ge 2$ 
and let  $\alpha\in \ov\Q$ be such that 
\begin{equation}
\label{eq:alphacondpoly}
|\alpha|_v>\max_{j=0,\ldots,d-1} \{1,|a_j|_v\}
\end{equation} 
for some non-archimidean absolute value $|\cdot|_v$ of $\K$ (normalised in some way and extended to $\ov\K=\ov\Q$). Then $\{|f^{(n)}(\alpha)|_v\}_{n\in\N}$ is strictly increasing.
\end{lemma}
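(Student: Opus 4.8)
The plan is to work purely with the non-archimedean absolute value $|\cdot|_v$ and exploit the ultrametric inequality together with the hypothesis~\eqref{eq:alphacondpoly}. First I would show that if $\beta\in\ov\Q$ satisfies $|\beta|_v > \max_{j}\{1,|a_j|_v\}$, then $|f(\beta)|_v > |\beta|_v$ and, moreover, $f(\beta)$ again satisfies the same condition~\eqref{eq:alphacondpoly}; this makes the result follow by an immediate induction on $n$. So the whole argument reduces to a single step: controlling $|f(\beta)|_v$ when $|\beta|_v$ is large.

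For that step, write $f(\beta)=\beta^d + \sum_{j=0}^{d-1} a_j\beta^j$. Since $|\beta|_v>1$, each term $a_j\beta^j$ with $j\le d-1$ has $|a_j\beta^j|_v = |a_j|_v\,|\beta|_v^j \le |a_j|_v\,|\beta|_v^{d-1} < |\beta|_v\cdot|\beta|_v^{d-1} = |\beta|_v^{d}$, using $|a_j|_v<|\beta|_v$. Hence the leading term $\beta^d$ strictly dominates every other term in the $v$-adic sense, so by the ultrametric property (the strongest-wins principle) $|f(\beta)|_v = |\beta^d|_v = |\beta|_v^{d}$. Because $d\ge 2$ and $|\beta|_v>1$, this gives $|f(\beta)|_v = |\beta|_v^d \ge |\beta|_v^2 > |\beta|_v$, and also $|f(\beta)|_v = |\beta|_v^d > |\beta|_v > \max_j\{1,|a_j|_v\}$, so the hypothesis propagates.

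Applying this with $\beta = f^{(n)}(\alpha)$ for each $n\ge 0$ (the base case $n=0$ being exactly~\eqref{eq:alphacondpoly}), we conclude that $|f^{(n+1)}(\alpha)|_v = |f^{(n)}(\alpha)|_v^{\,d} > |f^{(n)}(\alpha)|_v$ for all $n$, which is precisely the assertion that the sequence $\{|f^{(n)}(\alpha)|_v\}_{n\in\N}$ is strictly increasing. I do not expect any genuine obstacle here: the only point requiring a little care is making sure the chosen normalisation and extension of $|\cdot|_v$ to $\ov\K$ is still multiplicative and ultrametric (which it is, since these properties are preserved under extension of non-archimedean valuations), so that both the strongest-wins step and the identity $|\beta^d|_v = |\beta|_v^d$ are valid. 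Everything else is the routine induction sketched above.
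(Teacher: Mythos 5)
Your proof is correct and is essentially the same as the paper's: both reduce to the single-step observation that under~\eqref{eq:alphacondpoly} the leading term dominates $v$-adically, so $|f(\beta)|_v=|\beta|_v^d$, and then induct after noting that the hypothesis propagates to $f(\beta)$.
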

\begin{proof}
The proof follows by induction on $n\ge 1$. For $n=1$ we need to prove that $|f(\alpha)|_v>|\alpha|_v$. 
We note that 
$$
 |\alpha^d-f(\alpha)|_v\le\max_{j = 0,\ldots, d-1} |a_j \alpha^{j}|_v = \max_{j = 0,\ldots, d-1} |a_j|_v |\alpha|_v^{j} <  |\alpha|_v^d,
$$
where the last inequality follows from~\eqref{eq:alphacondpoly}.
Hence, 
$$
|f(\alpha)|_v=\max\{ |\alpha^d-f(\alpha)|_v, |\alpha|_v^d\}=|\alpha|_v^d.
$$
The result now follows since $d\ge 2$ and thus $ |\alpha|_v^d >  |\alpha|_v$. 

We assume now the statement true for iterates up to $n-1$.
Hence for 
$\beta = f^{(n-1)}(\alpha)$ we have 
$$
|\beta|_v = |f^{(n-1)}(\alpha)|_v > \ldots > |f(\alpha)|_v > |\alpha|_v>\max_{j=0,\ldots,d-1} \{1,|a_j|_v\}.
$$
Using the same argument as for $n=1$ with $\beta$ instead of $\alpha$ we obtain 
$$
|f^{(n)}(\alpha)|_v  = |f(\beta)|_v > |\beta|_v = |f^{(n-1)}(\alpha)|_v, 
$$
which concludes the proof.
\end{proof}

From Lemma~\ref{lem:growiter} we also remark that any $\alpha\in\ovQ$ satisfying~\eqref{eq:alphacondpoly} is not a zero of the polynomial $f$. 

\begin{corollary}
\label{cor:ratfctgrow}
Let $h=f/g$, where $f,g\in\K[X]$ are defined by 
$$
f=X^d+a_{d-1}X^{d-1}+\cdots+a_0,\quad g=X^e+b_{e-1}X^{e-1}+\cdots+b_0.
$$
Let $\alpha\in \ov\Q$ be such that 
\begin{equation}
\label{eq:alphacondrat}
|\alpha|_v>\max_{\substack{0\le i\le d\\0\le j\le e}} \{1,|a_i|_v,|b_j|_v\}
\end{equation}
for some non-archimidean absolute value $|\cdot|_v$ of $\K$. 
If $d-e>1$, then $\{|h^{(n)}(\alpha)|_v\}_{n\in\N}$ is strictly increasing.
\end{corollary}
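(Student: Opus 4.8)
The plan is to follow the proof of Lemma~\ref{lem:growiter}, applying its ultrametric estimate to the numerator $f$ and the denominator $g$ separately, and then to run an induction showing that the defining inequality~\eqref{eq:alphacondrat} is preserved along the entire orbit.

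The first step is the one-step estimate: if $\beta\in\ov\Q$ satisfies $|\beta|_v>\max_{0\le i\le d,\,0\le j\le e}\{1,|a_i|_v,|b_j|_v\}$, then $\beta$ is not a pole of $h$ and $|h(\beta)|_v=|\beta|_v^{\,d-e}$. Indeed, exactly as in the proof of Lemma~\ref{lem:growiter}, the ultrametric inequality gives $|\beta^d-f(\beta)|_v\le\max_{0\le j\le d-1}|a_j|_v|\beta|_v^{\,j}<|\beta|_v^{\,d}$, whence $|f(\beta)|_v=|\beta|_v^{\,d}$; the same argument applied to $g$ yields $|g(\beta)|_v=|\beta|_v^{\,e}\ne 0$, so $\beta$ is not a zero of $g$, the value $h(\beta)$ is defined, and $|h(\beta)|_v=|f(\beta)|_v/|g(\beta)|_v=|\beta|_v^{\,d-e}$.

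Next I would observe that, because $d-e>1$ and $|\beta|_v>1$, this gives $|h(\beta)|_v=|\beta|_v^{\,d-e}>|\beta|_v$, and in fact $|h(\beta)|_v>|\beta|_v>\max_{0\le i\le d,\,0\le j\le e}\{1,|a_i|_v,|b_j|_v\}$, so $h(\beta)$ again satisfies~\eqref{eq:alphacondrat}. Applying this repeatedly starting from $\alpha$, an induction on $n$ shows that every iterate $h^{(n)}(\alpha)$ is defined, is not a pole of $h$, and satisfies $|h^{(n)}(\alpha)|_v=|h^{(n-1)}(\alpha)|_v^{\,d-e}>|h^{(n-1)}(\alpha)|_v$; hence $\{|h^{(n)}(\alpha)|_v\}_{n\in\N}$ is strictly increasing.

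There is no real obstacle here; the only point requiring attention is the stability of~\eqref{eq:alphacondrat} under one application of $h$, which is precisely where the hypothesis $d-e>1$ is needed---it forces $|h(\beta)|_v=|\beta|_v^{\,d-e}$ to exceed not merely $|\beta|_v$ but also the fixed threshold $\max\{1,|a_i|_v,|b_j|_v\}$, so that the induction closes. (The condition $d\ge 2$ is automatic: if $e=0$ the statement is Lemma~\ref{lem:growiter}, and if $e\ge 1$ then $d\ge e+2\ge 3$.)
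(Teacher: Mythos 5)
Your proof is correct and follows essentially the same route as the paper's: apply the ultrametric estimate from the proof of Lemma~\ref{lem:growiter} to $f$ and $g$ separately to get $|h(\beta)|_v=|\beta|_v^{d-e}$, note that $d-e>1$ preserves the hypothesis~\eqref{eq:alphacondrat}, and close the induction. The only cosmetic difference is that you make the stability of~\eqref{eq:alphacondrat} under $h$ an explicit lemma-like step, whereas the paper folds it into the inductive hypothesis.
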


\begin{proof}
We proceed by induction over $n\ge 1$. For $n=1$ one has to prove that $|h(\alpha)|_v>|\alpha|_v$. From the proof of Lemma~\ref{lem:growiter} and the definition of $\alpha$, one has $|f(\alpha)|_v=|\alpha|_v^d$ and $|g(\alpha)|_v=|\alpha|_v^e$, and thus $|h(\alpha)|_v=|\alpha|_v^{d-e}$. In particular, from the remark after Lemma~\ref{lem:growiter} we know that $\alpha$ is not a poly of $h$.
Since $d-e>1$, the conclusion follows for $n=1$.

We assume now the statement true for iterates up to $n-1$.
Hence for 
$\beta = h^{(n-1)}(\alpha)$ we have 
$$
|\beta|_v = |h^{(n-1)}(\alpha)|_v > \ldots > |h(\alpha)|_v > |\alpha|_v>\max_{\substack{0\le i\le d\\0\le j\le e}}  \{1,|a_i|_v,|b_j|_v\}.
$$
Using the same argument as for $n=1$ with $\beta$ instead of $\alpha$ we obtain 
$$
|h^{(n)}(\alpha)|_v  = |h(\beta)|_v > |\beta|_v = |h^{(n-1)}(\alpha)|_v, 
$$
which concludes the proof.
\end{proof}

We remark that for the case $d-e\le 1$, Corollary~\ref{cor:ratfctgrow} does not hold. Indeed, let $\alpha\in\ov\Q$ satisfying~\eqref{eq:alphacondrat}. From the proof of  Corollary~\ref{cor:ratfctgrow} we have $|h(\alpha)|_v=|\alpha|_v^{d-e}$.

If $d=e$, then $|h(\alpha)|_v=1$, and thus $\{|h^{(n)}(\alpha)|_v\}_{n\in\N}$ is not strictly increasing (or decreasing). For example, assume $\max_{\substack{0\le i\le d\\0\le j\le e}}  \{|a_i|_v,|b_j|_v\}<1$, then $|h^{(n)}(\alpha)|_v=1$ for all $n\ge 1$.

If $d=e+1$, then one has $|h^{(n)}(\alpha)|_v=|\alpha|_v$, $n\ge 1$, and thus again it is not strictly increasing. 

If $d-e<0$, assume that $|a_0|_v,|b_0|_v>1$.  From the above we have $|h(\alpha)|_v=|\alpha|_v^{d-e}\le |\alpha|_v^{-1}$, and in particular, $|h(\alpha)|_v^{-1}\ge  |\alpha|_v$, which satisfies~\eqref{eq:alphacondrat}. Now, for the next iterate we have
$$
|h^{(2)}(\alpha)|_v=|h(\alpha)|_v^{d-e}\frac{|f^*(h(\alpha)^{-1})|_v}{|g^*(h(\alpha)^{-1})|_v},
$$ 
where $f^*(X)=X^df(X^{-1})$ and $g^*=X^eg(X^{-1})$  (thus, $a_0$ and $b_0$ become the leading coefficients of $f^*$ and $g^*$, repsectively). Since $|h(\alpha)|_v^{-1}$ satisfies~\eqref{eq:alphacondrat} and using the condition $|a_0|_v,|b_0|_v>1$, simple computations as in the proof of Lemma~\ref{lem:growiter} show that $|h^{(2)}(\alpha)|_v=|a_0|_v/|b_0|_v$, which does not tell us anything about being increasing or decreasing from the previous iterate $h(\alpha)$. 

Thus, although similar ideas might work for $d-e\le 1$, different conditions are needed to be in place to control the strict growth of iterates, which is essential for the proof of Theorem~\ref{thm:KcOrb}.

In the proof of Theorem~\ref{thm:KcOrb} we  
apply Lemma~\ref{lem:loxton} to iterates of a rational function applied in a point of $\K^c$. 
For this reason we need to control the growth of the house of such iterates, which is presented below.
However it is convenient to start with estimating the absolute value of the iterates.

\begin{lemma}
\label{lem:growabsval}
Let $h=f/g$, where $f,g\in\K[X]$ are defined by 
$$
f=X^d+a_{d-1}X^{d-1}+\cdots+a_0,\quad g=X^e+b_{e-1}X^{e-1}+\cdots+b_0.
$$
Let $\alpha\in \ov\Q$ be such that 
\begin{equation}
\label{eq:alphacond}
|\alpha|>1+\sum_{i=0}^{d-1}|a_i|+\sum_{j=0}^{e-1}|b_j|.
\end{equation}
If $d-e>1$, then $\{|h^{(n)}(\alpha)|\}_{n\in\N}$ is strictly increasing.
\end{lemma}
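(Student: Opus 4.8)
The plan is to mirror the archimedean version of the argument given in Corollary~\ref{cor:ratfctgrow} for the non-archimedean case, replacing the ultrametric inequality with the ordinary triangle inequality and the hypothesis~\eqref{eq:alphacondrat} with the more generous hypothesis~\eqref{eq:alphacond}. The proof will again go by induction on $n\ge 1$, and the crux of the whole thing is the base case $n=1$: one must show that $|\alpha|>1+\sum_{i=0}^{d-1}|a_i|+\sum_{j=0}^{e-1}|b_j|$ forces both $|h(\alpha)|>|\alpha|$ and, as a byproduct, that the quantity $|h(\alpha)|$ again satisfies the same kind of bound so the induction can proceed.

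For the base case I would first bound $|f(\alpha)|$ from below and $|g(\alpha)|$ from above by the triangle inequality. Writing $f(\alpha)=\alpha^d+\sum_{i=0}^{d-1}a_i\alpha^i$, we get
$$
|f(\alpha)|\ge |\alpha|^d-\sum_{i=0}^{d-1}|a_i||\alpha|^i\ge |\alpha|^d-\Big(\sum_{i=0}^{d-1}|a_i|\Big)|\alpha|^{d-1},
$$
using $|\alpha|>1$. Since $|\alpha|>1+\sum_{i=0}^{d-1}|a_i|+\sum_{j=0}^{e-1}|b_j|\ge 1+\sum_{i=0}^{d-1}|a_i|$, we have $\sum_{i=0}^{d-1}|a_i|<|\alpha|-1$, hence $|f(\alpha)|>|\alpha|^d-(|\alpha|-1)|\alpha|^{d-1}=|\alpha|^{d-1}$. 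Similarly $|g(\alpha)|\le |\alpha|^e+\big(\sum_{j=0}^{e-1}|b_j|\big)|\alpha|^{e-1}<|\alpha|^e+(|\alpha|-1)|\alpha|^{e-1}=|\alpha|^{e}$ — in particular $g(\alpha)\ne 0$, so $\alpha$ is not a pole of $h$. Combining, $|h(\alpha)|=|f(\alpha)|/|g(\alpha)|>|\alpha|^{d-1}/|\alpha|^e=|\alpha|^{d-e-1}$, and since $d-e>1$ this gives $|h(\alpha)|>|\alpha|^{d-e-1}\ge|\alpha|$, with strict inequality because $d-e-1\ge 1$ and $|\alpha|>1$. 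In fact one gets the stronger $|h(\alpha)|>|\alpha|^{d-e-1}\ge |\alpha|$, so $|h(\alpha)|>|\alpha|>1+\sum_{i=0}^{d-1}|a_i|+\sum_{j=0}^{e-1}|b_j|$, which is exactly the hypothesis~\eqref{eq:alphacond} with $h(\alpha)$ in place of $\alpha$.

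The inductive step is then immediate and identical in shape to Corollary~\ref{cor:ratfctgrow}: assuming the claim for iterates up to $n-1$ and setting $\beta=h^{(n-1)}(\alpha)$, the chain $|\beta|>\cdots>|h(\alpha)|>|\alpha|>1+\sum_{i=0}^{d-1}|a_i|+\sum_{j=0}^{e-1}|b_j|$ shows $\beta$ satisfies~\eqref{eq:alphacond}, so the base-case argument applied to $\beta$ yields $|h^{(n)}(\alpha)|=|h(\beta)|>|\beta|=|h^{(n-1)}(\alpha)|$, completing the induction. The only point requiring care — and the one I would call the main obstacle — is to make the estimates clean enough that they close up, i.e.\ that the lower bound on $|h(\alpha)|$ is again at least $|\alpha|$ rather than merely some smaller positive quantity; this is precisely why the constant $1+\sum|a_i|+\sum|b_j|$ (rather than a bare $\max$) appears in~\eqref{eq:alphacond}, as it is what makes the telescoping $|\alpha|^d-(|\alpha|-1)|\alpha|^{d-1}=|\alpha|^{d-1}$ work, and it is also what forces $g(\alpha)\ne 0$ so that the orbit is genuinely defined at every step.
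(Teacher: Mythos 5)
Your base case contains an algebraic error that breaks the argument. You claim
\[
|g(\alpha)|\le |\alpha|^e+\Big(\sum_{j=0}^{e-1}|b_j|\Big)|\alpha|^{e-1}<|\alpha|^e+(|\alpha|-1)|\alpha|^{e-1}=|\alpha|^{e},
\]
but the last equality is false: $|\alpha|^e+(|\alpha|-1)|\alpha|^{e-1}=(2|\alpha|-1)|\alpha|^{e-1}$, which is roughly $2|\alpha|^e$, not $|\alpha|^e$. (You are reusing the telescoping $|\alpha|^d-(|\alpha|-1)|\alpha|^{d-1}=|\alpha|^{d-1}$ from the $f$-side, but that identity relies on the minus sign; with a plus sign the two leading terms add rather than cancel.) With the correct upper bound $|g(\alpha)|<2|\alpha|^e$ and your lower bound $|f(\alpha)|>|\alpha|^{d-1}$, you only get $|h(\alpha)|>|\alpha|^{d-1-e}/2$, which in the borderline case $d-e=2$ gives $|h(\alpha)|>|\alpha|/2$, not $|h(\alpha)|>|\alpha|$, so the induction does not close. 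Separately, the assertion that the upper bound $|g(\alpha)|<|\alpha|^e$ shows $g(\alpha)\ne 0$ is logically backwards; you would need a lower bound on $|g(\alpha)|$ for that (and such a lower bound does follow from~\eqref{eq:alphacond} by the same argument you used for $f$).

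The fix is to avoid bounding $|f(\alpha)|$ and $|g(\alpha)|$ by powers of $|\alpha|$ separately and instead keep the two sums coupled, which is what the paper does. Use $|f(\alpha)|\ge |\alpha|^{d-1}\bigl(|\alpha|-\sum_{i}|a_i|\bigr)$ and $|g(\alpha)|\le |\alpha|^{e}\bigl(1+\sum_{j}|b_j|\bigr)$, so that
\[
|h(\alpha)|\ \ge\ |\alpha|^{d-1-e}\cdot\frac{|\alpha|-\sum_{i}|a_i|}{1+\sum_{j}|b_j|}\ >\ |\alpha|^{d-1-e}\ \ge\ |\alpha|,
\]
where the middle inequality uses~\eqref{eq:alphacond} exactly once to see that the fraction exceeds $1$, and the last step uses $d-e>1$ and $|\alpha|>1$. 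This closes cleanly for all $d-e\ge 2$; your decoupled bounds lose a constant factor that cannot be recovered. The inductive step you wrote is then fine once the base case is repaired.
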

\begin{proof}
The proof goes again by induction over $n$. We prove only the case $n=1$, since the implication from $n-1$ to $n$ follows exactly the same lines. We have $|h(\alpha)|=\frac{|f(\alpha)|}{|g(\alpha)|}$.

We look first at $|f(\alpha)|$. As above, by the triangle inequality, we have
$|f(\alpha)|\ge |\alpha|^d - |f(\alpha)-\alpha^d|$. Since
$$
 |f(\alpha)-\alpha^d|=|a_{d-1}\alpha^{d-1}+\cdots+a_0|\le |\alpha|^{d-1}\sum_{i=0}^{d-1}|a_i|,
$$
where the last inequality follows since $|\alpha|\ge 1$, we conclude that
\begin{equation}
\label{eq:absvalpolyf}
|f(\alpha)|\ge |\alpha|^{d-1}\(|\alpha|-\sum_{i=0}^{d-1}|a_i|\).
\end{equation}

We also have that 
\begin{equation}
\label{eq:absvalpolyg}
|g(\alpha)|=|\alpha^e+b_{e-1}\alpha^{e-1}+\cdots+b_0|\le |\alpha|^e\(1+\sum_{j=0}^{e-1}|b_j|\).
\end{equation}

Putting together~\eqref{eq:absvalpolyf} and~\eqref{eq:absvalpolyg}, and recalling the initial assumption~\eqref{eq:alphacond}, we conclude that $|h(\alpha)|>|\alpha|$. The induction step from $n-1$ to $n$ follows the same way as for $n=1$
\end{proof}

\begin{corollary}
\label{cor:growhouse}
Let $h=f/g$, where $f,g\in\K[X]$ are defined by 
$$
f=X^d+a_{d-1}X^{d-1}+\cdots+a_0,\quad g=X^e+b_{e-1}X^{e-1}+\cdots+b_0.
$$
Let $A\in\R$ be positive and define
\begin{equation}
\label{eq:lh}
L_h=\max_{\sigma}\left\{1+\sum_{i=0}^{d-1}|\sigma(a_i)|+\sum_{j=0}^{e-1}|\sigma(b_j)|,A\right\},
\end{equation}
where the maximum
is taken over all embeddings $\sigma$ of $\K$ in $\C$.
Let $\alpha\in \ov\Q$ be such that 
 $C\(h^{(k)}(\alpha)\)\le A$ for some $k\ge 1$.
If $d-e>1$,  then $C\(h^{(\ell)}(\alpha)\)\le L_h$ for all $\ell<k$. 
\end{corollary}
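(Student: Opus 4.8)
The plan is to argue by contradiction, reducing everything to the single–variable growth statement of Lemma~\ref{lem:growabsval} applied to a suitable Galois conjugate of the situation. Suppose that $C\(h^{(\ell)}(\alpha)\) > L_h$ for some $\ell < k$. Since $\alpha\in\ov\Q$ and $h$ has coefficients in $\K$, the number $\beta:=h^{(\ell)}(\alpha)$ lies in the number field $\K(\alpha)$, and (every embedding of $\Q(\beta)$ extending to one of $\K(\alpha)$) we have $C(\beta)=\max_\tau|\tau(\beta)|$, where $\tau$ runs over the embeddings $\K(\alpha)\hookrightarrow\C$. I would pick $\tau$ attaining this maximum, so that $|\tau(\beta)|=C(\beta)>L_h$, and set $\sigma:=\tau|_{\K}$.

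Next I would record the standard fact that iteration commutes with field embeddings. Writing $h_\sigma=f_\sigma/g_\sigma$ for the rational function obtained by applying $\sigma$ to the coefficients of $f$ and $g$, an easy induction on $j$ — using that $h^{(k)}(\alpha)$ being defined means no pole is met along the orbit, a property preserved by the field embedding $\tau$ — gives $\tau\(h^{(j)}(\alpha)\)=h_\sigma^{(j)}(\tau(\alpha))$ for all $j\le k$; in particular $\tau\(h^{(k)}(\alpha)\)=h_\sigma^{(k-\ell)}(\tau(\beta))$. Moreover $h_\sigma$ has exactly the shape required by Lemma~\ref{lem:growabsval}: $f_\sigma$ is monic of degree $d$ with subleading coefficients $\sigma(a_i)$, and $g_\sigma$ is monic of degree $e$ with subleading coefficients $\sigma(b_j)$, so $d-e>1$ still holds. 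Crucially, because $L_h$ in~\eqref{eq:lh} is defined as a maximum over \emph{all} embeddings $\sigma$ of $\K$ (together with $A$), the inequality $|\tau(\beta)|>L_h$ forces $|\tau(\beta)|>1+\sum_{i=0}^{d-1}|\sigma(a_i)|+\sum_{j=0}^{e-1}|\sigma(b_j)|$, which is precisely hypothesis~\eqref{eq:alphacond} for $h_\sigma$ at the point $\tau(\beta)$; as in the remark following Lemma~\ref{lem:growiter}, this also guarantees $\tau(\beta)$ is not a pole of $h_\sigma$, so the iterates below are genuinely defined.

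Then Lemma~\ref{lem:growabsval}, applied to $h_\sigma$ and the point $\tau(\beta)$, shows that $\{|h_\sigma^{(n)}(\tau(\beta))|\}_{n\in\N}$ is strictly increasing. Since $k-\ell\ge 1$, this yields $|h_\sigma^{(k-\ell)}(\tau(\beta))|>|\tau(\beta)|>L_h\ge A$, the last inequality by the definition of $L_h$. But $h_\sigma^{(k-\ell)}(\tau(\beta))=\tau\(h^{(k)}(\alpha)\)$ is a conjugate of $h^{(k)}(\alpha)$ over $\Q$, hence $C\(h^{(k)}(\alpha)\)\ge |\tau\(h^{(k)}(\alpha)\)|>A$, contradicting the hypothesis $C\(h^{(k)}(\alpha)\)\le A$. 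Therefore $C\(h^{(\ell)}(\alpha)\)\le L_h$ for every $\ell<k$.

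The argument is otherwise routine; the only point requiring genuine care — and the step I would flag as the crux — is the passage between $h$ and its conjugate functions $h_\sigma$: one must verify both that iterating commutes with the embedding $\tau$ (including that no pole is hit, so that all intermediate iterates remain well defined and the identity $\tau\(h^{(j)}(\alpha)\)=h_\sigma^{(j)}(\tau(\alpha))$ holds) and that the single threshold $L_h$, being the maximum over all embeddings of the thresholds appearing in~\eqref{eq:alphacond} together with $A$, simultaneously governs the dynamics of \emph{every} conjugate function. Once this bookkeeping is in place, Lemma~\ref{lem:growabsval} does all the work.
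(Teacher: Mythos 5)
Your proposal is correct and follows essentially the same route as the paper's own proof: argue by contradiction, pass to a conjugate $\sigma(h^{(\ell)}(\alpha))$ exceeding $L_h$, observe that iteration commutes with the embedding so this is $\sigma(h)^{(\ell)}(\sigma(\alpha))$, and apply Lemma~\ref{lem:growabsval} to $\sigma(h)$ at that point to force $|\sigma(h^{(k)}(\alpha))|>A$, contradicting $C\(h^{(k)}(\alpha)\)\le A$. The only difference is that you spell out more carefully the distinction between the embedding $\tau$ of $\K(\alpha)$ and its restriction $\sigma$ to $\K$, and the fact that no pole is met — points the paper takes for granted.
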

\begin{proof}
It follows from Lemma~\ref{lem:growabsval}. Indeed, assume that $d-e>1$ and that $C\(h^{(\ell)}(\alpha)\)> L_h$ for some $\ell<k$. This means that there exists a conjugate of $h^{(\ell)}(\alpha)$, which we denote by
$\sigma\(h^{(\ell)}(\alpha)\)$, such that $|\sigma\(h^{(\ell)}(\alpha)\)|> L_h$. We note that $\sigma\(h^{(\ell)}(\alpha)\)=\sigma(h)^{(\ell)}(\sigma(\alpha))$, where $\sigma(h)$ is the rational function $h$ in which we replace the coefficients of $f$ and $g$ by $\sigma(a_i)$ and $\sigma(b_j)$, $i=0,\ldots,d-1$, $j=0,\ldots,e-1$. We apply now Lemma~\ref{lem:growabsval} with the rational function $\sigma(h)$ and the point $\sigma\(h^{(\ell)}(\alpha)\)$ satisfying $|\sigma\(h^{(\ell)}(\alpha)\)|> L_h$ to conclude that $\{\sigma\(h^{(\ell+n)}(\alpha)\)\}_{n\in\N}$ is strictly increasing. Thus, we obtain that $|\sigma\(h^{(k)}(\alpha)\)|>A$, which is a contradiction with $C\(h^{(k)}(\alpha)\)\le A$.
\end{proof}

\subsection{Growth of the number of terms in rational function iterates} The main result of the paper relies on the following result of Fuchs and Zannier~\cite[Corollary]{FZ} which says that the number of terms in the iterates $h^{(n)}$ of a rational function $h\in\K(X)$ goes to infinity with $n$ (see~\cite{Za07} for a previous result applying only to polynomials). Here is the more precise statement:

\begin{lemma}
\label{lem:FZ}
Let $q\in\K(X)$ be a non-constant rational function and let $h\in\K(X)$ be of degree $d\ge 2$. Assume that $h$ is not  special. Then, for any $n\ge 3$, $h^{(n)}$ is a ratio of polynomials having all together at least $\frac{1}{\log 5}\((n-2)\log d - \log 2016\)$ terms.
\end{lemma}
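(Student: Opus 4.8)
The plan is to reduce the statement to the theorem of Fuchs and Zannier on compositions of rational functions expressible with few terms --- itself the rational-function counterpart of Zannier's theorem on composite lacunary polynomials --- and to eliminate the exceptional case that it leaves open by a ramification argument.

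Fix $n\ge 3$, and let $\ell$ be the number of terms of $h^{(n)}$, counted altogether in the numerator and denominator of the reduced fraction. (If one wishes to retain the function $q$ of the statement, run the same argument with $q\circ h^{(n)}$ in place of $h^{(n)}$; since $\deg q\ge 1$ this changes nothing below.) First I would decompose
\[
h^{(n)}=h^{(n-2)}\circ h^{(2)},
\]
so that the inner factor $H:=h^{(2)}$ has degree $d^{2}\ge 4$ while the outer factor $G:=h^{(n-2)}$ has degree $d^{n-2}\ge 2$; this is exactly where $n\ge 3$ enters. Applying the Fuchs--Zannier estimate to $G\circ H$ then yields a dichotomy: \emph{either} $\deg G$ is bounded by an explicit function of $\ell$ alone --- in the normalisation at hand, $\deg G\le 2016\cdot 5^{\ell}$ --- \emph{or} the inner factor $H$ is, after pre- and post-composition with M\"obius transformations, the power map $X^{m}$ or the Chebyshev polynomial $T_{m}$ with $m=d^{2}$ (the post-composing map being moreover tied to the shape of $G$). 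In the first case $d^{n-2}\le\deg G\le 2016\cdot 5^{\ell}$, and taking logarithms gives precisely $\ell\ge\frac{1}{\log 5}\bigl((n-2)\log d-\log 2016\bigr)$.

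It remains to rule out the second alternative, which is the crux. Suppose $h^{(2)}$ were M\"obius-equivalent on both sides to $X^{m}$ or to $T_{m}$. Such a map is totally ramified over exactly two points of $\PP^{1}$ (the power case) or over exactly one point (the Chebyshev case), so by Riemann--Hurwitz its ramification divisor, of degree $2d^{2}-2$, is accounted for by one or two points of ramification index $d^{2}$. Writing $h^{(2)}=h\circ h$ and using the multiplicativity $e_{h\circ h}(x)=e_{h}(x)\,e_{h}(h(x))$ of ramification indices together with $e_{h}(\,\cdot\,)\le d$, the index $d^{2}$ at such a point $P$ forces $e_{h}(P)=e_{h}(h(P))=d$; combining this with $\mathrm{Crit}(h)\subseteq\mathrm{Crit}(h^{(2)})$ and chasing which critical point of $h$ maps where (separately in the power and the Chebyshev sub-cases) shows that $h$ itself has the ramification portrait of a power map or of a Chebyshev polynomial. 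Either way $h$ is M\"obius-equivalent to $X^{d}$ or to $T_{d}$, i.e.\ special, contradicting the hypothesis. Hence the first alternative must hold, which completes the proof.

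The main obstacle is the Fuchs--Zannier composition estimate invoked above: that is where the real weight lies, and its proof draws on the structure of solutions of $S$-unit (vanishing-subsum) equations, a careful analysis of the Newton polygon and of the behaviour at $0$ and $\infty$ of a polynomial with few terms, and specialisation arguments; in this plan I would simply cite it. The only self-contained ingredient is the ramification descent of the previous paragraph, which transfers ``special'' from $h^{(2)}$ back to $h$; it is classical but requires the somewhat fiddly case analysis indicated.
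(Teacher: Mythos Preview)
The paper gives no proof of this lemma at all: it is quoted directly as \cite[Corollary]{FZ}, so there is nothing in the paper to compare your argument against. Your plan --- factor $h^{(n)}=h^{(n-2)}\circ h^{(2)}$, apply the main theorem of \cite{FZ} to this composite, and then push the exceptional alternative from $h^{(2)}$ down to $h$ by a ramification count --- is exactly the natural route from the main theorem of \cite{FZ} to its corollary, and in that sense your sketch is on target.

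There is, however, a real wrinkle in your descent step that you gloss over. In the power case you correctly obtain two points $P\ne Q$ with $e_h(P)=e_h(h(P))=e_h(Q)=e_h(h(Q))=d$, and Riemann--Hurwitz forces $\{h(P),h(Q)\}\subseteq\{P,Q\}$. If both are fixed, $h$ is conjugate to $cX^{d}$; but if $h$ swaps them, $h$ is conjugate to $cX^{-d}$. Take $h(X)=1/X^{2}$: then $h^{(2)}=X^{4}$ and every iterate is a monomial, yet $1/X^{2}$ is \emph{not} $\mathrm{PGL}_2$-conjugate to any of $\pm X^{2}$ or $\pm T_{2}$ (compare the fixed-point multiplier spectra $\{-2,-2,-2\}$ versus $\{0,0,2\}$ and $\{0,-2,4\}$). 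So your sentence ``either way $h$ is M\"obius-equivalent to $X^{d}$ or to $T_{d}$'' is not correct as written, and in fact the lemma, read with Definition~\ref{def:Srf} literally, already fails for this $h$. In \cite{FZ} the excluded class is phrased slightly more broadly and does cover such maps; for the application in Theorem~\ref{thm:KcOrb} the standing hypothesis $d-e>1$ rules them out anyway. Your write-up should either widen the notion of ``special'' to match \cite{FZ}, or add a line explaining why the swap case cannot occur under the hypotheses actually used later.
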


\section{Proof of Theorem~\ref{thm:KcOrb}}

We proceed first by bringing the rational function $h$ to a monic rational function (that is, both numerator and denominator are monic polynomials). Indeed, if $h=f/g=c\wf/\wg$ with $c\in\K^*$ and $\wf$ and $\wg$  monic polynomials, then there exists a linear polynomial $\cL=\mu X\in\ovQ[X]$ such that $h_{\mu}=\cL\circ h\circ \cL^{-1}$ is monic, that is $\mu$ is a solution to the equation $c\mu^{1-d+e}=1$.
Without loss of generality (enlarging the field $\K$ if necessary) 
we can assume that $\mu\in\K$.  

Since $h_{\mu}^{(k)}=\cL\circ h^{(k)}\circ \cL^{-1}$,  we can work with the monic rational function 
\begin{equation}
\label{eq:hmu}
h_{\mu}(X)=\mu \frac{f(\mu^{-1}X)}{g(\mu^{-1}X)}=\frac{f_{\mu}(X)}{g_{\mu}(X)},
\end{equation}
where
$$
f_{\mu}(X) = X^d+a_{d-1}X^{d-1}+\cdots+a_0,\quad g_{\mu}(X) = X^e+b_{e-1}X^{e-1}+\cdots+b_0.
$$ 
We are now left with proving the finiteness of the set
$$
\cS_{h,\mu}=\{\alpha\in \K^c \mid \ h_{\mu}^{(k)}(\alpha)\in \cL(\U) \textrm{ for some } k\ge 1\}
$$
(rather than of the set $\cS_h$).

The proof follows the approach of the proof of~\cite[Theorem~2]{DZ}, coupled with Corollary~\ref{cor:corcor1DZ}. Indeed, by~\eqref{eq:hmu} simple computations show that $f_{\mu}(X)-\cL(Y^m)g_{\mu}(X)$ as a polynomial in X does not have a root in $\K^c(Y)$ if and only if $f(X)-Y^mg(X)$ has the same property, which is satisfied by our hypothesis. Thus, we apply Corollary~\ref{cor:corcor1DZ} with the polynomial $f_{\mu}(X)-Yg_{\mu}(X)$ and get that there are finitely many $\beta\in \cL(\U)$ such that $f_{\mu}(X)-\beta g_{\mu}(X)$ has a zero in $\K^c$. This implies that there are finitely many $\beta\in \cL(\U)$ such that $h_{\mu}(X)-\beta$ has a zero in $\K^c$. Denote by $S$ the set of such $\beta\in \cL(\U)$.

It is sufficient to prove that, for any $\beta\in S$,  there are finitely many $\alpha\in \K^c$ such that $h_{\mu}^{(k)}(\alpha)=\beta$ for some $k\ge 1$.

Let $A_\mu=C(\mu)$ where $\mu\in\K$ is the coefficient of the above linear polynomial $\cL(X) =\mu X$. 
Thus, for any $\beta=\mu\xi\in \cL(\U)$ for some $\xi\in\U$, we have $C(\beta)=C(\mu\xi)=A_{\mu}$.

Let $L_h$ be a positive integer defined as in Corollary~\ref{cor:growhouse} with $A$ replaced by $A_{\mu}$.  Let also $M$
be  a sufficiently large positive integer, chosen to satisfy
\begin{equation}
\label{eq:M large}
M>\frac{B_{h,\K}\log 5+\log 2016}{\log \max\{d,e\}}+2,
\end{equation}
where $B_{h,\K}$ is defined below to be a constant depending only on $h$ and $\K$.

If $k\le M$, then obviously there are finitely many $\alpha\in \K^c$ such that $h_{\mu}^{(k)}(\alpha)=\beta$ for any $\beta\in S$.

We assume now $k>M$ and we denote 
$$
\cS_{h,\mu}(M)=\{\alpha\in \cS_{h,\mu}\mid \ h_{\mu}^{(k)}(\alpha)\in \cL(\U) \textrm{ for some } k>M\},
$$
where as above $\cL(X) = \mu X$.

By Corollary~\ref{cor:growhouse}, for any $\alpha\in \cS_{h,\mu}(M)$ we have that $C(h_{\mu}^{(r)}(\alpha))\le L_h$ for all $r=0,\ldots,M$.

Moreover, as in the proof of~\cite[Theorem~2]{DZ}, for any $\alpha\in \cS_{h,\mu}(M)$ and any non-archimidean place $|\cdot|_v$ of $\K$ (normalised in some way and extended to $\ov\K=\ov\Q$), we have that 
$$|h_{\mu}^{(r)}(\alpha)|_v\le \max\{1,|\mu|_v,|a_j|_v,|b_j|_v\}$$ 
for all $r=0,\ldots,M$, since otherwise, by Corollary~\ref{cor:ratfctgrow}, we have that $\{|h_{\mu}^{(r+n)}(\alpha)|_v\}_{n\in\N}$ is strictly 
increasing, which contradicts $h_{\mu}^{(k)}(\alpha)\in \cL(\U)$ (and thus $|h_{\mu}^{(k)}(\alpha)|_v=|\mu\xi|_v=|\mu|_v$ for some $\xi\in\U$) for some $k> M$. 
Hence, taking a positive integer $D_{h,\mu}$ such that $D_{h,\mu}a_i$ and $D_{h,\mu}b_j$, $i=0,\ldots,d-1$, $j=0,\ldots,e-1$, and $D_{h,\mu}\mu$ are all algebraic 
 integers,
 we conclude that  $$|D_{h,\mu}h_{\mu}^{(r)}(\alpha)|_v\le \max\{|D_{h,\mu}|_v,|D_{h,\mu}\mu|_v,|D_{h,\mu}a_j|_v,|D_{h,\mu}b_j|_v\}\le 1,$$ and thus $D_{h,\mu}h_{\mu}^{(r)}(\alpha)$ are all algebraic integers for any $\alpha\in \K^c$ and $r=0,\ldots,M$. 
 
 Applying now Lemma~\ref{lem:loxton} for $D_{h,\mu}h_{\mu}^{(r)}(\alpha)$, $r=0,\ldots,M$, there exist a positive integer 
$ B_{h,\K}$ 
and a finite set 
$E_{\K}$, depending only on $h_{\mu}$ and $\K$
such that, for every $\alpha\in \cS_{h,\mu}(M)$ and every integer $0\le r\le M$, we can write $h_{\mu}^{(r)}(\alpha)$ in the form 
\begin{equation}
\label{eq:iterlox}
h_{\mu}^{(r)}(\alpha)=c_{r,1}\xi_{r,1}+\cdots+c_{r,B_{h,\K}}\xi_{r,B_{h,\K}}, \quad r=0,\ldots,M,
\end{equation}
where $c_{r,i}\in E_{\K}$ and $\xi_{r,i}\in\U$.

Assume now that for $M$ satisfying~\eqref{eq:M large} the set $\cS_{h,\mu}(M)$ is infinite. Since for any $\alpha\in\cS_{h,\mu}(M)$, $h_{\mu}^{(r)}(\alpha)$ can be written in the form~\eqref{eq:iterlox}, and the set $E_{\K}$ is finite, we can pick an infinite subset $\cT_{h,\mu}(M)$ of $\cS_{h,\mu}(M)$ such that, for any $\alpha\in\cT_{h,\mu}(M)$, the $c_{r,i}\in E_{\K}$ in~\eqref{eq:iterlox} are fixed for $i=1,\ldots,B_{h,\K}$ and $r=0,\ldots,M$. In other words, the coefficients $c_{r,i}$ do not depend on $\alpha$.

As in the proof of~\cite[Theorem~2]{DZ}, we may use the first equation corresponding to $r=0$ to replace $\alpha$ on the left-hand side of~\eqref{eq:iterlox} and thus obtain
\begin{equation}
\label{eq:variterlox}
h_{\mu}^{(r)}\(\sum_{i=1}^{B_{h,\K}}c_{0,i}x_{0,i}\)=\sum_{i=1}^{B_{h,\K}}c_{r,i}x_{r,i}, \quad r=1,\ldots,M.
\end{equation}
We view the points $(\xi_{r,i})$, $i=1,\ldots,B_{h,\K}$, $r=0,\ldots,M$, as torsion points on the variety defined by the equations derived from~\eqref{eq:variterlox} in $\G_{m}^{B_{h,\K}(M+1)}$, and by our assumption, there are infinitely many such points.

Following the proof of ~\cite[Theorem~2]{DZ}, which is based on the {\it torsion points theorem\/} (see~\cite[Theorem~4.2.2]{BG}),  this leads to the following identities
\begin{equation}
\label{eq:ratiterlox}
h_{\mu}^{(r)}\(\sum_{i=1}^{B_{h,\K}}c_{0,i}\xi_{0,i}t^{e_{0,i}}\)=\sum_{i=1}^{B_{h,\K}}c_{r,i}\xi_{r,i}t^{e_{r,i}}, \quad r=1,\ldots,M,
\end{equation}
where $\xi_{r,i}$ are roots of unity and $e_{r,i}$ are integers, not all zero.

We denote by $q(t)=\sum_{i=1}^{B_{h,\K}}c_{0,i}\xi_{0,i}t^{e_{0,i}}$, and~\eqref{eq:ratiterlox} shows that 
the rational functions $h_{\mu}^{(r)}(q(t))$, $r =1, \ldots, M$, can be represented  by a rational function  with 
 at most $B_{h,\K}$ number of terms. 
We apply now Lemma~\ref{lem:FZ} and conclude that $$B_{h,\K}\ge \frac{1}{\log 5}\((M-2)\log \max\{d,e\} - \log 2016\),$$ which contradicts the choice of $M$ as in~\eqref{eq:M large}. This concludes the proof.

\section{Comments}
\label{sec:comm}

It would be of interest to extend the result of~\cite{DZ} and of this paper to the set~\eqref{eq:Th}. 
One important tool would be the Hilbert Irreducibility Theorem over $\K^c$, which for such extensions would mean to prove that under some natural conditions, a polynomial $g\in\K^c[X,Y]$ has the property that $g(X,\alpha)$ is reducible over $\K^c$ for finitely many $\alpha\in\K^c$ with $C(\alpha)\le A$. One way to start investigating such a result would be to use first the Loxton theorem and represent all such $\alpha$ in the form $\eta_1\xi_1+\ldots+\eta_B\xi_B$, where $\eta_i\in E$, $i=1,\ldots,B$, $B$  a positive number and $E$ a finite set that depend only on $\K$. 
We thus reduced the problem to proving that there exist finitely many tuples $(\xi_1,\ldots,\xi_B)\in\U^B$ such that $C(\eta_1\xi_1+\ldots+\eta_B\xi_B)\le A$ and $g(X,\eta_1\xi_1+\ldots+\eta_B\xi_B)$ is reducible over $\K^c$. Then, one can apply the multivariate version of Hilbert Irreducibility Theorem over cyclotomic fields with explicit specialisations at the set of torsion points of $\G_m^B$ due to Zannier, see~\cite[Theorem 2.1]{Za}.

\section*{Acknowledgements}
The author is very grateful to Umberto Zannier for suggesting that the technique used in the proof of~\cite[Theorem 2]{DZ} would work for proving the results obtained in this paper. The author would like to thank very much  Igor Shparlinski and Umberto Zannier for many useful and inspiring discussions, and for comments on an initial version of the paper. 

The research of A.~O. was supported by the 
UNSW Vice Chancellor's Fellowship.

\end{document}